\renewcommand{\geq}{\geqslant}
\renewcommand{\leq}{\leqslant}
\newtheorem{theorem}{Theorem}
\newtheorem{corollary}{Corollary}
\newtheorem{lemma}{Lemma}
\newtheorem{remark}{Remark}
\newtheorem{example}{Example}
\newcommand{\R}{\mathbb{R}}
\newcommand{\N}{\mathbb{N}}
\newcommand{\C}{\mathbb{C}}
\def\adj{\operatorname{adj}}
\title[A new family of integrable differential systems]{A new family of integrable differential systems in arbitrary dimension}
\author[J. D. Garc\'\i a--Salda\~na]{Johanna D. Garc\'\i a--Salda\~na}
\address{Departamento de Matem\'atica y F\'\i sica Aplicadas, Universidad Cat\'olica de la Santis\'\i ma Concepci\'on, Concepci\'on, Chile}
\email{jgarcias@ucsc.cl}
\author[A. Gasull]{Armengol Gasull}
\address{Departament de Matem\`atiques, Universitat Aut\`onoma de Barcelona, Barcelona, Spain}
\email{armengol.gasull@uab.cat}
\author[S. Rebollo--Perdomo]{Salom\'on Rebollo--Perdomo}
\address{Departamento de Matem\'atica, Universidad del B\'\i o-B\'\i o, Concepci\'on, Chile}
\email{srebollo@ubiobio.cl}
\subjclass[2010]{Primary: 34A05, 34A34; Secondary: 34C20, 37J35, 37K10.}
\keywords{Integrability, differential system, first integral, complete integrability}
\date{28/01/2025}
\begin{document}

\begin{abstract} We present a wide class of differential systems in any dimension that are either integrable or complete integrable. In particular, our result enlarges  a known family of planar integrable systems.  
We give an extensive list of examples that illustrates the applicability of our approach. For instance, in the plane this list includes some  Liénard, Lotka--Volterra and quadratic systems; in the space, some  Kolmogorov, Rikitake and Rössler systems. Examples of complete integrable systems in higher dimensions are also provided.
\end{abstract}

\maketitle

\section{Introduction and main result}

This work deals with $C^r\!$ real autonomous differential systems, {\it i.e.}, systems of ordinary differential equations 
\begin{equation}
\label{sistema-autonomo-general}
\dot{\mathbf{x}}=\mathbf{F}(\mathbf{x}), \qquad  \mathbf{x}\in 
\Omega \subset \R^n,
\end{equation}
where the dot denotes derivative with respect to an independent real variable~$t$,
$\Omega$ is an open subset of $\R^n$ 
and
$\mathbf{F}$ 
is a $C^r\!$ function in $\Omega$, 
with $r \in \N\cup \{\infty,\omega\}$, where as usual $\N$ denotes the set of positive integer numbers. 
Recall that $C^{\omega}\!$
stands for analytic functions.

A $C^k\!$ \emph{first integral} 
in $\Omega$ of system 
\eqref{sistema-autonomo-general}
is a locally non-constant $C^k\!$ function 
$H\colon \Omega \longrightarrow \R$, with $k \in \N \cup \{\infty,\omega\}$, that satisfies the equation
\begin{equation*}
   \dot{H}(\mathbf{x})= \nabla H(\mathbf{x})\, \mathbf{F}(\mathbf{x}) = 0
\end{equation*}
on the whole $\Omega$. 
This means that $H$ is constant throughout the orbits of the 
system. In other words, 
the orbits of the 
system live in the level hypersurfaces of the first integral $H,$ which implies that we can reduce in one dimension the study of \eqref{sistema-autonomo-general}.

Recall that 
the $C^k\!$ functions 
$H_1(\mathbf{x}), \ldots, H_m(\mathbf{x})$
are \emph{functionally independent in $\Omega$}
if their gradients, 
$\nabla H_1(\mathbf{x}),
\ldots,
\nabla H_m(\mathbf{x})
$,
are linearly independent in a 
full Lebesgue measure open subset of $\Omega$.
Hence, if system \eqref{sistema-autonomo-general}
has $m$ functionally independent 
$C^k\!$ first integrals in~$\Omega$, 
with $1\leq m<n$, then 
the orbits of the  system live in
the intersection set of the level hypersurfaces 
of these first integrals, which generically is a $(n - m)$-dimensional subset of~$\Omega$. If $m = n - 1$, then the intersection set is in general one-dimensional, and so it is formed by orbits of the system. 
Thus, if system \eqref{sistema-autonomo-general} has $n - 1$ functionally
independent first integrals, then all orbits of the system are determined by the
first integrals except perhaps in a zero Lebesgue measure subset of~$\Omega$. Therefore, it is said that
system  
\eqref{sistema-autonomo-general} 
is $C^k\!$ \emph{completely integrable in $\Omega$}
if it has $n-1$ functionally independent 
$C^k\!$ first integrals in $\Omega$.

The \emph{weak}  or \emph{strong} versions of the integrability problem consist in finding conditions on $\mathbf{F}(\mathbf{x})$ under which   system \eqref{sistema-autonomo-general} has one first integral or  $n-1$ functionally independent first integrals, respectively. Both versions also ask about the type of such first integrals: are they  $C^k\!$ functions, with $k \in \N \cup \{\infty,\omega\}$, polynomial, rational, Darboux, or Liouville? 
In this sense, differential system  
\eqref{sistema-autonomo-general} 
is called \emph{polynomially integrable in $\Omega$} if it has a polynomial first
integral in $\Omega$ and it is called \emph{polynomially completely integrable in $\Omega$} if it has
$n-1$ functionally independent  polynomial  first integrals in $\Omega$. Similarly, we can define rational, Darboux, or Liouville (completely) integrable in $\Omega$.

The integrability of differential systems \eqref{sistema-autonomo-general} is a very active research issue in the field of ordinary differential equations. See for instance \cite{AGG2019,AGR2019,LRR2020} and references there in, or next discussion. 
There are different tools and methods to study the integrability of differential systems; for example, Lie symmetries \cite{Olv1993}, Noether symmetries \cite{SaCa1981}, the Darboux theory of integrability \cite{Dar1878a, Dar1878b, LlZh2009}, Lax pairs \cite{Lax1968,Gor2001,LlZh2009}, singularity analysis \cite{Gor2001}, and integrating factors as well as inverse integrating factors \cite{GGG2010}.
However, in general, the problem of determining whether a concrete differential system \eqref{sistema-autonomo-general} is integrable remains unsolved.
 
In this work, we will focus in 
$C^r\!$ real autonomous differential systems of the form 
\begin{equation}
\label{sistema-autonomo-local}
\dot{\mathbf{x}}=\mathbf{F}(\mathbf{x}), \qquad  \mathbf{x}\in 
(\R^n,\mathbf{0}),
\end{equation}
where 
$(\R^n,\mathbf{0})$ 
is a small enough 
open neighborhood of 
the origin $\mathbf{0}$ 
of $\R^n.$
We are mainly interested in
the local strong integrability problem, that is, 
to find a particular structure on 
$\mathbf{F}(\mathbf{x})$ 
under which system \eqref{sistema-autonomo-local} is (completely) integrable in  
$(\R^n,\mathbf{0})$.
We shall assume that 
$\mathbf{0}$ is a singular point of the differential system, that is, 
$\mathbf{F}(\mathbf{0})=\mathbf{0}$, because otherwise the classical flow box theorem solves the problem.

Recall that $\adj(\mathbf {A})$ denotes the adjoint of a square matrix $\mathbf{A}$ and it is the transpose of the cofactor matrix of $\mathbf {A}$. If $A$ is  of  size $n$, then the identity 
$
\mathbf {A} \operatorname {adj} (\mathbf {A} )=\det(\mathbf {A} )\mathbf{I}_n
$
holds,
where $\mathbf{I}_n$ is 
the identity matrix of size $n$. Moreover, in particular, when $\mathbf {A}$ is invertible $\mathbf {A}^{-1}=\operatorname {adj} (\mathbf {A} )/\det(\mathbf {A} ).$

Before giving our main result we explain the simple idea that leads to its statement.  It is well-known that if  two differential systems, both defined  in $ 
(\R^n,\mathbf{0}),$
\[
\dot{\mathbf{x}}=\mathbf{K}(\mathbf{x}) \quad\mbox{and}\quad  \dot{\mathbf{u}}=\mathbf{G}(\mathbf{u}),  
\] 
are conjugated  via a $C^k\!$ diffeomorphism $\mathbf{u}=\mathbf{\Phi} (\mathbf{x}),$ $\mathbf{x}=\mathbf{\Phi}^{-1} (\mathbf{u}),$ such that $\mathbf{\Phi}(\mathbf{0})=\mathbf{0}$, then they share all their dynamical properties. In particular, they have the same number of functionally independent first integrals. Let us specify the relation between both differential systems:
\begin{align*}
\mathbf{K}(\mathbf{x}) &=D \mathbf{\Phi}^{-1} (\mathbf{\Phi} (\mathbf{x})) \mathbf{G}(\mathbf{\Phi} (\mathbf{x}))=\big(D \mathbf{\Phi}(\mathbf{x}))\big)^{-1} \mathbf{G}(\mathbf{\Phi} (\mathbf{x})) \\ &=\frac{ \operatorname {adj} (D \mathbf{\Phi}(\mathbf{x})))}{\det (D \mathbf{\Phi}(\mathbf{x}))}    \mathbf{G}(\mathbf{\Phi} (\mathbf{x})).
\end{align*}
Moreover, if in the differential system  $\dot{\mathbf{x}}=\mathbf{K}(\mathbf{x})$ we change the time $t$ by a new one, $s,$ such that ${{\rm d}t }/{{\rm d}s}= R(\mathbf{x})\det (D \mathbf{\Phi}(\mathbf{x})),$ for some  real-valued function $R$ that is defined in an open subset $\mathcal{O}$ of $(\R^n, \mathbf{0})$ and it is nonzero on open sets, then  
we  get the new differential system
\[
\mathbf{x}'=\frac{{\rm d}t }{{\rm d}s}\dot{\mathbf{x}}=R(\mathbf{x})\operatorname {adj} (D \mathbf{\Phi}(\mathbf{x})))\mathbf{G}(\mathbf{\Phi} (\mathbf{x})),
\] 
which is equivalent to  $\dot{\mathbf{u}}=\mathbf{G}(\mathbf{u})$ in $\mathcal{O} \backslash \{R(\mathbf{x})=0\}$. It is also  well-known that this new differential system has the same number of functionally independent first integrals that  $\dot{\mathbf{u}}=\mathbf{G}(\mathbf{u}).$

Although the derivation of the last differential system uses that $\mathbf{\Phi}$  is invertible, its structure remains well-defined even when $\mathbf{\Phi}$ is not invertible.
This observation serves as the basis for conjecturing that completely integrable differential systems can be constructed by using non-invertible functions $\mathbf{\Phi}$. This is precisely what our main result will establish.

We also must introduce a key concept in our approach. Let 
$\bm{\Phi} \colon (\mathbb{R}^n, \mathbf{0}) \longrightarrow \mathbb{R}^n$ 
be a $ C^k\!$ map, with $ k \in \mathbb{N} \cup \{ \infty, \omega \} $. We say that $ \bm{\Phi} $ \emph{preserves dimension} if the image $ \bm{\Phi}(U) $ of any open set $ U \subset (\mathbb{R}^n, \mathbf{0}) $ contains open sets. Of course, diffeomorphisms preserve dimension, but other non-injective or non-exhaustive  maps, like for instance the 2-fold map  $\bm{\Phi}(x,y)=(x^2,y)$, do as well.

\begin{theorem}\label{th:main} Consider the $C^r\!$  differential system,	with $r\in \N \cup \{\infty,\omega\},$
	\begin{equation}
		\label{main-system}
		\dot{\mathbf{x}}=\mathbf{F}(\mathbf{x}):= R (\mathbf{x})
		\adj(D\bm{\Phi}(\mathbf{x}))\,
		\mathbf{G}(\bm{\Phi}(\mathbf{x})), \qquad  \mathbf{x}\in 
		(\R^n,\mathbf{0}),
	\end{equation}
	with $\mathbf{F}(\mathbf{0})=\mathbf{0}$, 
where   $\bm{\Phi}, \mathbf{G} \colon
(\R^n,\mathbf{0}) \longrightarrow \R^n$ are suitable $C^k\!$ functions, with $k\ge r$, $\bm{\Phi}(\mathbf{0})=\mathbf{0}$, and $R(\mathbf{x})$ is a real-valued function   such that  	 $R(\mathbf{x})\neq 0$ in a full Lebesgue measure open subset of $(\R^n,\mathbf{0})$. Assume also that $\bm{\Phi}$ preserves dimension.  Then:
\begin{enumerate}[(i)]
	
\item  If $I\colon (\R^n,\mathbf{0})\longrightarrow \R$	 is a $C^k\!$  first integral of $\dot{\mathbf{x}}=\mathbf{G}(\mathbf{x})$, then the function $H(\mathbf{x}):=(I\circ\bm{\Phi})(\mathbf{x})$  is a $C^k\!$ first integral of  system \eqref{main-system}.

\item  If $I_j\colon (\R^n,\mathbf{0})\longrightarrow \R,$  $j=1,\ldots,m\le n-1,$	 are $m$ functionally independent  $C^k\!$ first integrals of $\dot{\mathbf{x}}=\mathbf{G}(\mathbf{x})$, then the functions $H_j(\mathbf{x}):=(I_j\circ\bm{\Phi})(\mathbf{x}),$ $j=1,\ldots,m,$ are  functionally independent $C^k\!$ first integrals of  system \eqref{main-system}.

\item If $\mathbf{G}(\mathbf{0})\neq \mathbf{0}$,
then  system~\eqref{main-system} is $C^k\!$ completely integrable in $(\R^n,\mathbf{0})$.
\end{enumerate}	
\end{theorem}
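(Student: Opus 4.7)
The plan is to exploit the Cramer-style identity $D\bm{\Phi}(\mathbf{x})\adj(D\bm{\Phi}(\mathbf{x}))=\det(D\bm{\Phi}(\mathbf{x}))\mathbf{I}_n$ to push first integrals of $\dot{\mathbf{u}}=\mathbf{G}(\mathbf{u})$ back through the (possibly non-invertible) map $\bm{\Phi}$, while invoking the ``preserves dimension'' hypothesis to ensure the pulled-back functions are not locally constant. Part (i) is essentially a direct computation, part (ii) adds a linear-algebra argument on the regular set of $\bm{\Phi}$, and part (iii) is an immediate corollary of (ii) combined with the flow box theorem.

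For (i), set $H:=I\circ\bm{\Phi}$ and apply the chain rule to the definition of $\mathbf{F}$:
\[
\dot{H}(\mathbf{x})=\nabla I(\bm{\Phi}(\mathbf{x}))\,D\bm{\Phi}(\mathbf{x})\,\mathbf{F}(\mathbf{x})=R(\mathbf{x})\,\nabla I(\bm{\Phi}(\mathbf{x}))\bigl[D\bm{\Phi}(\mathbf{x})\adj(D\bm{\Phi}(\mathbf{x}))\bigr]\mathbf{G}(\bm{\Phi}(\mathbf{x})).
\]
The bracketed matrix equals $\det(D\bm{\Phi}(\mathbf{x}))\mathbf{I}_n$, leaving $\dot H=R\det(D\bm{\Phi})\,(\nabla I)(\bm{\Phi})\,\mathbf{G}(\bm{\Phi})$, which vanishes because $I$ is a first integral of $\dot{\mathbf{x}}=\mathbf{G}(\mathbf{x})$. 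To certify $H$ as a first integral it remains to rule out that it is locally constant: if $H$ were constant on an open $U$, then $I$ would be constant on $\bm{\Phi}(U)$, which contains an open set by ``preserves dimension'', contradicting that $I$ is locally non-constant.

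For (ii), the chain rule again gives $\nabla H_j(\mathbf{x})=(\nabla I_j)(\bm{\Phi}(\mathbf{x}))\,D\bm{\Phi}(\mathbf{x})$. Let $W:=\{\mathbf{x}:\det(D\bm{\Phi}(\mathbf{x}))\neq 0\}$; by Sard's theorem the hypothesis ``preserves dimension'' forces $W$ to be open and dense, since if $\det(D\bm{\Phi})$ vanished on some open set its image would be a null set and could not contain an open set. On $W$, right-multiplication by $(D\bm{\Phi})^{-1}$ shows that $\nabla H_1,\ldots,\nabla H_m$ are linearly independent at $\mathbf{x}$ if and only if $(\nabla I_1)(\bm{\Phi}(\mathbf{x})),\ldots,(\nabla I_m)(\bm{\Phi}(\mathbf{x}))$ are. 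Letting $V$ be a full-measure open subset on which $\nabla I_1,\ldots,\nabla I_m$ are independent, and using that $\bm{\Phi}|_W$ is locally a diffeomorphism and therefore pulls null sets back to null sets, the set $\bm{\Phi}^{-1}(V)\cap W$ is open and supports the required linear independence of the $\nabla H_j$.

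For (iii), since $\mathbf{G}(\mathbf{0})\neq\mathbf{0}$ the flow box theorem yields a $C^k$ change of coordinates straightening $\dot{\mathbf{x}}=\mathbf{G}(\mathbf{x})$ to $\dot{\mathbf{y}}=(1,0,\ldots,0)^\top$ in a neighborhood of $\mathbf{0}$, and the pullbacks of the last $n-1$ coordinates form $n-1$ functionally independent $C^k$ first integrals of $\dot{\mathbf{x}}=\mathbf{G}(\mathbf{x})$; applying (ii) then transfers them to $n-1$ functionally independent first integrals of \eqref{main-system}. The main delicate point is the measure-theoretic bookkeeping in (ii): ``preserves dimension'' yields density of $W$ immediately, but upgrading $\bm{\Phi}^{-1}(V)\cap W$ to a genuinely \emph{full}-measure open subset of $(\R^n,\mathbf{0})$ requires $W$ itself to have full measure. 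This is automatic in the analytic category, where $\det(D\bm{\Phi})$ is analytic; in the smooth case one either takes $W$ as the natural domain of independence or argues componentwise, which is the step to be handled most carefully.
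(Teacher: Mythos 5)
Your proof is correct and follows essentially the same route as the paper's: the same adjugate--chain-rule computation plus the dimension-preservation hypothesis for (i), the same transfer of linear independence through the set where $\det(D\bm{\Phi})\neq 0$ for (ii) (you argue directly where the paper argues by contradiction, a trivial reformulation), and the same flow box argument for (iii). The measure-theoretic subtlety you flag at the end --- that ``preserves dimension'' only gives density, not full measure, of $\{\det(D\bm{\Phi})\neq 0\}$ outside the analytic category --- is genuine but is equally present and unaddressed in the paper's own proof, which silently treats the negation of functional independence as dependence on an open set.
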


Notice that if $\mathbf{u}=\bm{\Phi}(\mathbf{x})$ preserves dimension and system 
$\dot{\mathbf{u}}=\mathbf{G}(\mathbf{u})$ is completely integrable in $(\R^n,\mathbf{0})$, then Theorem~\ref{th:main} ensures that system~\eqref{main-system} is also completely integrable in $(\R^n,\mathbf{0})$. In such a case, the phase portrait of system~\eqref{main-system} can be obtained from the phase portrait of $\dot{\mathbf{u}}=\mathbf{G}(\mathbf{u})$.
As we will see, in many cases we will take $R=1.$ It is clear that this function simply corresponds to a reparameterization of the trajectories of the system and it has no relevance when people is interested on integrability matters.
For brevity, under the hypothesis of Theorem \ref{th:main}, we call the system $\dot{\mathbf{u}}=\mathbf{G}(\mathbf{u})$ {\it a reduced differential system for  system~\eqref{main-system} through $\bm{\Phi}$}.

To the best of our knowledge, our theorem extends known results for $n=2$ (see Section~\ref{se:plano}) and provides a new mechanism in the search of (complete) integrable differential systems in any dimension.

Next remark presents two examples that show the importance of the hypothesis on $\mathbf{\Phi}$ in Theorem~\ref{th:main}.

\begin{remark} (i)	Consider $\mathbf{x} = (x, y)$,
$\mathbf{u} = (u, v)$, $R=1,$ $\mathbf{G}(\mathbf{u})= (1,2u)^T$ and $\mathbf{\Phi}(\mathbf{x})=(x,-x^2+k),$ for some constant $k\in\R.$ Then $I(\mathbf{u})=u^2+v$ is a first integral of $\dot {\mathbf{u}}=\mathbf{G}(\mathbf{u})$ but $H(\mathbf{x})=I(\mathbf{\Phi}(\mathbf{x}))\equiv k$ it is not a first integral of  $\dot {\mathbf{x}}=\mathbf{F}(\mathbf{x}).$ Obviously, $\mathbf{\Phi}$ does not preserve dimension.
	
(ii) Consider $\mathbf{x} = (x, y, z)$,
$\mathbf{u} = (u, v, w)$,
  $R=1,$ $\mathbf{G}(\mathbf{u})= (0, v, -w)^T$ and $\mathbf{\Phi}(\mathbf{x})=(yz, y ,z).$ Then $I_1(\mathbf{u})=u$ and $I_2(\mathbf{u})=uv$ are functionally independent first integrals of $\dot {\mathbf{u}}=\mathbf{G}(\mathbf{u})$ and	 $H_1(\mathbf{x})=I_1(\mathbf{\Phi}(\mathbf{x}))=yz$ and  $H_2(\mathbf{x})=I_2(\mathbf{\Phi}(\mathbf{x}))=yz$ are also first integrals of  $\dot {\mathbf{x}}=\mathbf{F}(\mathbf{x}),$ but they are not functionally independent because both coincide. Again, $\mathbf{\Phi}$ does not preserve dimension.
	\end{remark}	

Nevertheless, even when  $\mathbf{\Phi}$ does not preserve dimension, the approach of the theorem can sometimes  work to provide complete integrable systems. For instance, when $m=n-1$ first integrals $I_j$ are explicit and functionally independent it may happen that the corresponding $H_j$ are also functionally independent. This situation is illustrated in Example~\ref{Ejemplo-con-diferencial-degenerada-2D} for $n=2$ (see  Section~\ref{se:plano}) and in Example~\ref{Ejemplo-con-diferencial-degenerada} for $n=3$ (see Section~\ref{sec:3D}). In fact, in these examples the determinant of 
	$D\bm{\Phi}(\mathbf{x})$ vanishes identically.

In general, Theorem \ref{th:main} has two immediate applications:  it allows to construct complete integrable differential systems having singularities from differential systems without singularities and it  gives a method to obtain systems in any dimension with a singularity at the origin and with local (complete) integrability.

We also notice that when in system~\eqref{main-system} we consider  $\mathbf{F}=\mathbf{G}$ and 
$\bm{\Phi}$ an involution, then it is said that the system is \emph{orbitally $\bm{\Phi}$-symmetric} and in such a case the system presents strong symmetries that make its study simpler. See \cite{BBT} and references there in.

\medskip
The paper is organized as follows.  Section 2 provides the proof of Theorem~\ref{th:main} as well as  some comments on it. In Section 3, we compare the family of integrable planar systems derived from Theorem~\ref{th:main} with previous results on the integrability of planar analytic differential systems. Additionally, we present several examples of the application of Theorem~\ref{th:main} in the real plane, including families of Liénard, Lotka–Volterra, and quadratic differential systems. Section 4 focuses on examples of the application of Theorem~\ref{th:main} in the three dimensional setting, which include differential systems of  Kolmogorov, Rikitake and Rössler type.
Finally, in Section 5 we present  some  applications in higher dimensions.

\section{Proof of Theorem~\ref{th:main}}

In this section we will prove Theorem~\ref{th:main} and state some remarks around natural extensions.

\begin{proof}[Proof of Theorem \ref{th:main}]
(i) Let $I\colon (\R^n,\mathbf{0})\longrightarrow \R$	 be a $C^k\!$  first integral of $\dot{\mathbf{u}}=\mathbf{G}(\mathbf{u}),$  that is,
\begin{equation}
	\label{Condicion-I_j(u)}
	\nabla I(\mathbf{u})
	\, \mathbf{G}(\mathbf{u}) = 0,
	\quad \mbox{for all}\quad \mathbf{u}\in (\R^n,\mathbf{0}).
\end{equation}
Let us prove that the $C^k\!$ function $H(\mathbf{x}):=(I\circ\bm{\Phi})(\mathbf{x})$ it is a first integral of  system \eqref{main-system}, 
\[
\begin{aligned}
	\dot{H}(\mathbf{x})
	&=
	\nabla H(\mathbf{x})\,R (\mathbf{x})\,\adj(D\bm{\Phi}(\mathbf{x}))\,\mathbf{G}(\bm{\Phi}(\mathbf{x}))\\
	&=\nabla I(\bm{\Phi}(\mathbf{x}))\, 
	D\bm{\Phi}(\mathbf{x})\, R (\mathbf{x})\,
	\adj(D\bm{\Phi}(\mathbf{x}))\,\mathbf{G}(\bm{\Phi}(\mathbf{x})) \\
	&= \nabla I(\bm{\Phi}(\mathbf{x}))\,R (\mathbf{x})\,
	\det (D\bm{\Phi}(\mathbf{x}))\,\mathbf{I}_n\,\mathbf{G}(\bm{\Phi}(\mathbf{x}))\\
	&=R (\mathbf{x})\,\det(D\bm{\Phi}(\mathbf{x}))\,\nabla I(\bm{\Phi}(\mathbf{x}))\, \mathbf{G}(\bm{\Phi}(\mathbf{x}))=0, \quad \mbox{for}\quad  \mathbf{x}\in (\R^n,\mathbf{0}),
	\end{aligned}
\]
where   we  have used that $\nabla I(\bm{\Phi}(\mathbf{x}))\, \mathbf{G}(\bm{\Phi}(\mathbf{x}))=0$, which follows by replacing $\mathbf{u}=\bm{\Phi}(\mathbf{x})$ in equation \eqref{Condicion-I_j(u)}. Hence it only remains to prove that $H$ is not constant on open sets. We already know that this is the case for the function $I,$ because it is a first integral of 
$\dot{\mathbf{u}}=\mathbf{G}(\mathbf{u}).$ Then, since $\mathbf{\Phi}$ preserves dimension this property is translated to $H,$ as we wanted to prove.

(ii)  Assume now that $\dot{\mathbf{x}}=\mathbf{G}(\mathbf{x})$ has  $m\le n-1$   functionally independent 
$C^k\!$ first integrals
$I_1(\mathbf{u}),\ldots,I_{m}(\mathbf{u})$
of the system 
defined in $(\R^n,\mathbf{0})$.   Hence, for each $j \in \{1,\ldots,m\}$ we have
\begin{equation*}
	\nabla I_j(\mathbf{u})
	\, \mathbf{G}(\mathbf{u}) = 0,
	 \quad \mbox{for}\quad \mathbf{u}\in (\R^n,\mathbf{0})
\end{equation*}
and, moreover,
$\nabla I_1(\mathbf{u}),\ldots,\nabla I_{m}(\mathbf{u})$ are linearly independent for almost  any point $\mathbf{u} \in (\R^n,\mathbf{0})$.

We now define, for each $j\in \{1,\ldots,m\}$, the $C^k\!$ function $H_j(\mathbf{x}):=(I_j\circ\bm{\Phi})(\mathbf{x})$.  By item~(i) each $H_j$  is a first integral of system~\eqref{main-system}.

We will prove  by contradiction that all $H_1(\mathbf{x}),\ldots,H_{m}(\mathbf{x})$ are also functionally independent. Assume that the $C^k\!$ first integrals 
$
H_1(\mathbf{x}), 
\ldots, 
H_{m}(\mathbf{x})
$
are functionally dependent in 
$(\R^n,\mathbf{0})$, that is,
there exists an open subset 
$U\subset (\R^n,\mathbf{0})$ such that for each $\mathbf{x} \in U$ there are real constants $\lambda_{1}(\mathbf{x}),\ldots,$ $\lambda_{m}(\mathbf{x})$ not all of them zero, such that 
\[
\lambda_{1}(\mathbf{x}) \nabla H_1(\mathbf{x})+\cdots + 
\lambda_{m}(\mathbf{x}) \nabla H_{m}(\mathbf{x})=\mathbf{0}.
\]
Hence, 
$$
\big(\lambda_{1}(\mathbf{x})
\nabla I_1(\bm{\Phi}(\mathbf{x}))
+ \cdots + 
\lambda_{m}(\mathbf{x})
\nabla I_{m}(\bm{\Phi}(\mathbf{x}))\big)D\bm{\Phi}(\mathbf{x})=\mathbf{0}.
$$
Since $\bm{\Phi}$ preserves dimension, we can assume, without lost of generality, that 
$
\det(D\bm{\Phi}(\mathbf{x}))\neq 0$ for each $\mathbf{x} \in U$. Therefore,
$$
\lambda_{1}(\mathbf{x})
\nabla I_1(\mathbf{u})
+ \cdots + 
\lambda_{m}(\mathbf{x})
\nabla I_{m}(\mathbf{u})=0,
$$
where we have chosen $\mathbf{u}=\bm{\Phi}(\mathbf{x})$
in a non empty open subset of $\bm{\Phi}(U)$. The last equality  contradicts the linear independence of  $\nabla I_1(\mathbf{u}),\ldots,\nabla I_{m}(\mathbf{u})$ 
in   a 
full Lebesgue measure open subset of
$(\R^n, \mathbf{0})$.

(iii) 
When 
$\mathbf{G}(\mathbf{0})\not =\mathbf{0}$,
the flow box theorem 
guarantees that the $C^k\!$ differential system
$
\dot{\mathbf{u}}=\mathbf{G}(\mathbf{u})
$
is $C^k\!$ completely integrable 
in $(\R^n,\mathbf{0})$, that is, 
there are $n-1$ functionally independent 
$C^k\!$ first integrals
$I_1(\mathbf{u}),\ldots,I_{n-1}(\mathbf{u})$
of the system 
defined in $(\R^n,\mathbf{0}).$ Hence the result follows by item~(ii). 
\end{proof}

Next results are straightforward consequences of Theorem~\ref{th:main}.

\begin{remark}\label{Criterio-para-integrabilidad-2}
If differential system $\dot{\mathbf{u}}=\mathbf{G}(\mathbf{u})$
is $C^{\beta}\!$ completely integrable 
in $(\R^n,\mathbf{0})$  and it is a reduced differential system for  system 
\eqref{main-system} through a $C^{\alpha}\!$  function ${\bm{\Phi}\colon
(\R^n,\mathbf{0}) \longrightarrow (\R^n,\mathbf{0}),}$ that preserves dimension, then
 system \eqref{main-system}
is $C^{k}\!$ completely integrable in $(\R^n,\mathbf{0})$, with $k\geq \min\{\alpha,\beta\}$.
\end{remark}

\begin{remark}
\label{Teorema-con-punto-arbitrario}
\rm
We have considered differential systems in $(\R^n,\mathbf{0})$  only for simplicity. Of course, 
we can consider differential systems in $(\R^n,\mathbf{p})$, with $\mathbf{p}\in \R^n$ a singular point. For these systems we can use $C^k\!$ functions $\bm{\Phi}, \mathbf{G} \colon
(\R^n,\mathbf{p}) \longrightarrow (\R^n,\mathbf{q})$. Therefore, we have analogous results to Theorem~\ref{th:main}   if these functions satisfy that 
$\bm{\Phi}(\mathbf{p})=\mathbf{q},$ 
$\mathbf{G}(\mathbf{q})\neq \mathbf{0}$ and the other hypotheses of the theorem. Moreover,  Remark~\ref{Criterio-para-integrabilidad-2} also holds. 
\end{remark}

\begin{remark}
\rm Under the hypotheses of Theorem~\ref{th:main}, if 
$\dot{\mathbf{u}}=\mathbf{G}(\mathbf{u})$ is completely integrable in $\R^n$ and $\bm{\Phi}$ is defined on whole $\R^n$, then system~\eqref{main-system} is completely integrable in $\R^n,$ that is {\it globally completely integrable}.
\end{remark}

In order to guarantee  that a $ C^k $ map $ \bm{\Phi}$ preserves dimension and therefore the application of Theorem~\ref{th:main}, we state the following result.

\begin{lemma}
\label{cond-preserva-dimension}
Let  $ \bm{\Phi} \colon (\mathbb{R}^n, \mathbf{0}) \longrightarrow \mathbb{R}^n$  
be a $ C^k $ map, with $ k \in \mathbb{N} \cup \{ \infty, \omega \}.$ 
If  $\det(D\bm{\Phi}(\mathbf{x}))\neq 0$ in a full Lebesgue measure open subset of $(\mathbb{R}^n, \mathbf{0})$, then $\bm{\Phi}$ preserves dimension.
\end{lemma}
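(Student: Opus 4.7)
The statement to prove is purely local: given an open set $U \subset (\R^n,\mathbf{0})$, we must exhibit an open subset of $\bm{\Phi}(U)$. The natural tool is the inverse function theorem, which applies wherever the Jacobian is non-singular. So the plan is to locate a point of $U$ at which $\det(D\bm{\Phi}) \neq 0$ and then invoke the inverse function theorem there.

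Let $V\subset(\R^n,\mathbf{0})$ denote the full Lebesgue measure open subset on which $\det(D\bm{\Phi}(\mathbf{x}))\neq 0$. First I would argue that $V$ is dense in $(\R^n,\mathbf{0})$: its complement has Lebesgue measure zero and therefore has empty interior, so every non-empty open set meets $V$. In particular, given an arbitrary non-empty open set $U\subset(\R^n,\mathbf{0})$, the intersection $U\cap V$ is a non-empty open set. Pick any point $\mathbf{x}_0\in U\cap V$.

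At $\mathbf{x}_0$ the map $\bm{\Phi}$ is $C^k$ with $k\geq 1$ and $\det(D\bm{\Phi}(\mathbf{x}_0))\neq 0$, so the inverse function theorem produces an open neighborhood $W\subset U\cap V$ of $\mathbf{x}_0$ such that $\bm{\Phi}|_W\colon W\longrightarrow \bm{\Phi}(W)$ is a $C^k$ diffeomorphism and $\bm{\Phi}(W)$ is open in $\R^n$. Since $\bm{\Phi}(W)\subset \bm{\Phi}(U)$, we conclude that $\bm{\Phi}(U)$ contains the open set $\bm{\Phi}(W)$, which is exactly the condition that $\bm{\Phi}$ preserves dimension.

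The argument is quite short, so there is no real obstacle beyond the small topological observation that an open set of full measure is dense (so that $U\cap V$ is guaranteed to be non-empty, and not merely of full measure inside $U$). Everything else reduces to a single application of the classical inverse function theorem, which is available because the hypothesis $k\in\N\cup\{\infty,\omega\}$ ensures $\bm{\Phi}$ is at least $C^1$.
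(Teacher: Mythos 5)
Your proof is correct and follows exactly the route the paper indicates: the paper's entire proof is the one-line remark that the lemma ``follows from the Inverse Function Theorem,'' and your argument simply supplies the details (density of the full-measure open set $V$, so $U\cap V\neq\emptyset$, then the inverse function theorem at a point of $U\cap V$). Nothing is missing and nothing differs in substance.
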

\begin{proof}
Follows from the  Inverse Function Theorem.
\end{proof}

\section{Applications in the plane}\label{se:plano}

In this section, 
we  illustrate the application of our main result restricted to the real plane $\R^2$.  We start by giving a corollary of Theorem~\ref{th:main} which is a version of that result  when $n=2$.  We introduce the notations  
 $\mathbf{x}=(x,y),$  $\mathbf{u}=(u,v),$  $\mathbf{\Phi}=(\varphi,\psi)$ and $\mathbf{G}=(f,g)^{T}.$

\begin{corollary}
	\label{sistema-integrable-planar}
Consider $C^r\!$ planar differential system
\begin{equation}
	\label{main-system-n=2}
\left(\begin{array}{c}
	\dot x\\
	\dot y
\end{array}\right)= R(x,y)
\left(\begin{array}{rr}
	\psi_y(x,y) & -\varphi_y(x,y)\\
	-\psi_x(x,y) & \varphi_x(x,y)
\end{array} \right)
\left(\begin{array}{c}
	f(\varphi(x,y),\psi(x,y))\\
	g(\varphi(x,y),\psi(x,y))
\end{array}\right),
\end{equation}	where $\varphi,\psi, f,$ and $g$ are suitable $C^k\!$ scalar functions in $(\R^2,\mathbf{0})$, with $k\geq r$, and $R$ is a real-valued function  such that  $R(\mathbf{x})\neq 0$ in a full Lebesgue measure open subset of $(\R^2,\mathbf{0}).$  If the map $\mathbf{\Phi}=(\varphi,\psi)$ preserves dimension, then  the system is $C^r\!$ 
	integrable in $(\R^2,\mathbf{0})$ provided that
	$\varphi(\mathbf{0})=\psi(\mathbf{0})=0$ 
	and $f^2(\mathbf{0})+g^2(\mathbf{0})\neq 0$.
\end{corollary}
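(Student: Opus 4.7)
The plan is to read the corollary as the specialization of Theorem~\ref{th:main}, part~(iii), to dimension $n=2$. First I would verify that the coefficient matrix in~\eqref{main-system-n=2} is exactly $\adj(D\bm{\Phi}(\mathbf{x}))$ for $\bm{\Phi}=(\varphi,\psi)$. Since
\[
D\bm{\Phi}(\mathbf{x})=\begin{pmatrix}\varphi_x & \varphi_y\\ \psi_x & \psi_y\end{pmatrix},
\]
the transpose of its cofactor matrix is $\begin{pmatrix}\psi_y & -\varphi_y\\ -\psi_x & \varphi_x\end{pmatrix}$, which matches the matrix displayed in \eqref{main-system-n=2}. Hence \eqref{main-system-n=2} is precisely \eqref{main-system} with $n=2$ and $\mathbf{G}=(f,g)^T$.

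Next I would translate the remaining hypotheses. The assumption $\varphi(\mathbf{0})=\psi(\mathbf{0})=0$ is exactly $\bm{\Phi}(\mathbf{0})=\mathbf{0}$, and $f^2(\mathbf{0})+g^2(\mathbf{0})\neq 0$ amounts to $\mathbf{G}(\mathbf{0})\neq\mathbf{0}$. The dimension-preserving property of $\bm{\Phi}$ and the genericity condition on $R$ are exactly those imposed in Theorem~\ref{th:main}. Thus every hypothesis of Theorem~\ref{th:main}~(iii) holds.

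Applying Theorem~\ref{th:main}~(iii), system~\eqref{main-system-n=2} is $C^k$ completely integrable in $(\R^2,\mathbf{0})$; that is, it possesses $n-1=1$ functionally independent $C^k$ first integral. Since $k\geq r$, this first integral is in particular of class $C^r$, and in the plane the existence of a single non-constant first integral is exactly what $C^r$ integrability means, giving the desired conclusion.

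No genuine obstacle is expected: the corollary is essentially a dictionary translation of Theorem~\ref{th:main} to dimension two. The only point to take care with is the explicit $2\times 2$ identification of the adjoint matrix, which is immediate; everything else is a direct verification of hypotheses.
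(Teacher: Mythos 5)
Your proposal is correct and coincides with the paper's treatment: the paper offers no separate proof of Corollary~\ref{sistema-integrable-planar}, presenting it precisely as the $n=2$ specialization of Theorem~\ref{th:main}(iii), and your identification of the displayed $2\times 2$ matrix with $\adj(D\bm{\Phi})$ together with the dictionary of hypotheses is exactly the intended argument.
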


The above corollary extends a similar and previous result proved in \cite{AlGaRe}, which essentially corresponds to  the above one but fixing $\psi(x,y)= y^p/p$ and where the hypothesis that  $\mathbf{\Phi} (x,y)=(\varphi(x,y), y^p/p)$ preserves dimension is replaced by assuming that $\varphi_x(x,y)\not\equiv0.$ Corollary~\ref{sistema-integrable-planar} also covers similar  results that appear in \cite{Andreev}  and in the references of both papers. 

\begin{remark}
	The differential system~\eqref{main-system-n=2} admits a more compact version by using the notation of 1-forms. Notice that the reduced system  through $\mathbf{\Phi}$ can be written as $f(u,v)\,{\rm d}v-g(u,v)\,{\rm d}u=0,$ while system~\eqref{main-system-n=2} writes as 
$$
f(\varphi(x,y),\psi(x,y)) \,{\rm d}(\psi(x,y))- f(\varphi(x,y),\psi(x,y)) \,{\rm d} (\varphi(x,y))=0,
$$
where notice that we have omitted the multiplicative factor $R(x,y).$	 Indeed, this expression is the pull-back of the initial 1-form:
$$
\bm{\Phi}^{*}(f(u,v)\,{\rm d}v-g(u,v)\,{\rm d}u)=0.
$$
\end{remark}

Since in the planar case, the weak and strong versions of the integrability problem coincide,   we will use, for simplicity, the adjective integrable instead of  completely integrable. Next we list several integrable examples in the plane. In the first seven examples the functions $\bm{\Phi}$ that we will use preserve dimension, which can be proved easily using Lemma~\ref{cond-preserva-dimension}.

\begin{example}
\rm
\label{example-Kolmogorov-2D}
By using $R=1$ and the polynomial functions
$$
\bm{\Phi}(\mathbf{x})=
\bigg(xy, x+\frac{y^2}{2}\bigg)
\quad
\mbox{and} 
\quad  
\mathbf{G}(\mathbf{u})=
\big(u,1-u\big)^{T},
$$
system \eqref{main-system} becomes
\begin{equation*}
\begin{array}{l}
\dot{x}=x(-1+xy+y^2),\\[0.4pc]
\dot{y}=y(1-x-xy),
\end{array}
\end{equation*}
which is a  Kolmogorov planar system. 
Since $\bm{\Phi}(\mathbf{0})=\mathbf{0}$
and
$\mathbf{G}(\mathbf{0})\neq \mathbf{0}$,  this  system is $C^{\omega}\!$  integrable in $(\R^2,\mathbf{0})$ by Theorem~\ref{th:main}. 
Moreover, since system 
$\dot{\mathbf{u}}=\mathbf{G}(\mathbf{u})$ has the  first integral $I(\mathbf{u})=ue^{-u-v}$, our Kolmogorov system  is   $C^{\omega}\!$  integrable in whole real plane, with  the analytic Darboux first integral
$
H(\mathbf{x})=xy\exp\big(-x-xy-y^2/2\big)
$. 
Notice that the reduced system is free of finite singularities, while the Kolmogorov system has two singularities on the plane: a saddle at $\mathbf{0}$ and a center at a point in the first quadrant.
\end{example}

\begin{example}
\rm
By taking $R=1$  and the polynomial functions
$$
\bm{\Phi}(\mathbf{x})=
\bigg(\frac{x^2}{2}+\frac{b y^2}{2}, \frac{y^2}{2}\bigg)
\quad
\mbox{and} 
\quad  
\mathbf{G}(\mathbf{u})=
\big(1,-2u\big)^{T},
$$
where $b\in \R$ is a parameter, system \eqref{main-system} becomes
\begin{equation*}
\begin{array}{l}
\dot{x}=y+bx^2y+b^2y^3,\\[0.4pc]
\dot{y}=-x^3-bxy^2,
\end{array}
\end{equation*}
which is a planar system with a nilpotent singularity at the origin. 
Since $\bm{\Phi}(\mathbf{0})=\mathbf{0}$
and
$\mathbf{G}(\mathbf{0})\neq \mathbf{0}$,  this  system is $C^{\omega}\!$  integrable in $(\R^2,\mathbf{0})$ by Theorem~\ref{th:main}. 
Furthermore, since system 
$\dot{\mathbf{u}}=\mathbf{G}(\mathbf{u})$ has the  first integral $I(\mathbf{u})=v+u^2$, our system  is   polynomially  integrable in whole real plane, with  the polynomial first integral
$
H(\mathbf{x})=y^2/2+(x^2/2+by^2/2)^2,
$
which has a minimum at the origin. Since the system is hamiltonian, the origin is a center.
\end{example}

\begin{example} 
\rm
For each $k\in \N$, set $a=\sqrt{2}+k$. The linear differential system
$\dot{\mathbf{u}}=\mathbf{G}_k(\mathbf{u})$ 
given by
\begin{equation}
\label{Sist-lineal-con-integral-primera-Ck}
\begin{array}{l}
\dot{u}=(a+3)u+2av,\\[0.4pc]
\dot{v}=-2au-(4a-3)v,
\end{array}
\end{equation}
has a saddle at the origin and the first quadrant is foliated by orbits of a hyperbolic sector of the saddle.
By using $R=1$ and the polynomial function
$
\bm{\Phi}(\mathbf{x})=
\big(x^2, y^2\big),
$
system~\eqref{main-system} writes as
\begin{equation}
\label{Sist-con-centro-degenerado-de-clase-Ck}
\begin{array}{l}
\dot{x}=2(a+3)x^2y+4ay^3,\\[0.4pc]
\dot{y}=-4ax^3-2(4a-3)xy^2,
\end{array}
\end{equation}
which has a unique degenerated singularity at  $\mathbf{0}.$

The form of $\bm{\Phi}(\mathbf{x})$ implies 
that the orbits of  system~\eqref{Sist-lineal-con-integral-primera-Ck} 
contained in the first quadrant come from  closed curves around the origin in the $xy$-plane. Hence, system~\eqref{Sist-con-centro-degenerado-de-clase-Ck} has a degenerated global center at $\mathbf{0}$. Furthermore,
system~\eqref{Sist-lineal-con-integral-primera-Ck}
is $C^k\!$ integrable in $\R^2$, with  $C^k\!$ first integral $I(\mathbf{u})=(u+2v)(2u+v)^{a-1}$. Since $\bm{\Phi}(\mathbf{x})$ is defined on $\R^2$, Remark~\ref{Criterio-para-integrabilidad-2} implies that system
\eqref{Sist-con-centro-degenerado-de-clase-Ck}
is $C^k\!$ integrable in $\R^2$, with  $C^k\!$ first integral 
$$
H(\mathbf{x})=(x^2+2y^2)(2x^2+y^2)^{a-1}.
$$ 
\end{example}


\begin{example}
\rm
The reversible planar quadratic differential systems with a center at the origin write as
\begin{equation*}
\begin{array}{l}
\dot{x}=y+\alpha x y,\\[0.4pc]
\dot{y}=-x+\beta x^2+\gamma y^2,
\end{array}
\end{equation*}
where $\alpha,\beta,\gamma\in \R$,  and are also called  Loud systems. By Poincaré linearizability Theorem it is already  known that they have an analytic first integral at $(\R^2,\mathbf{0}).$  Let us prove this fact as a consequence of Theorem~\ref{th:main}. They can be written in the form~\eqref{main-system} with $R=1,$
$$
\bm{\Phi}(\mathbf{x})=
\bigg(x, \frac{y^2}{2}\bigg)
\quad
\mbox{and} 
\quad  
\mathbf{G}(\mathbf{u})=
\big(1+\alpha u,-u+\beta u^2+2\gamma v\big)^{T}.
$$
Since $\bm{\Phi}(\mathbf{0})=\mathbf{0}$
and
$\mathbf{G}(\mathbf{0})\neq \mathbf{0}$, the Loud system is locally $C^{\omega}\!$ integrable by Theorem~\ref{th:main}. 
Observe that the reduced system is also quadratic. 
But,  if $\beta (\gamma-\alpha) \neq 0$, then 
we can get a reduced system that is linear. 
Indeed, in such a case the Loud system can be written in the form \eqref{main-system}  with $R=1,$ 
$$
\bm{\Phi}(\mathbf{x})=\bigg({x}^{2}+\frac { \left( \gamma-\alpha \right) {y}^{2}}{\beta},\left( \gamma-\alpha \right) x\bigg)
$$
and
$$ 
\mathbf{G}(\mathbf{u})=\bigg(-{\frac {\beta\,\gamma\,u}{ \left( \gamma-\alpha \right) ^{2}}}+{
\frac { \left( \gamma-\alpha-\beta \right) v}{ \left( \gamma-\alpha
 \right) ^{3}}},
-\frac{1}{2}\,{\frac {\beta\,\alpha\,v}{ \left( \gamma-\alpha \right) ^{2}}}
-\frac{1}{2}\,{\frac {\beta}{\gamma-\alpha}}
\bigg)^{T}.
$$
\end{example}

\begin{example}
\rm
The analytic Li\'enard system 
$$
\begin{array}{l}
\dot{x}=y,\\[0.2pc]
\dot{y}=-xg(x^2)-xyf(x^2),  
\end{array}
$$
where $g$ and $f$ are $C^{\omega}\!$ functions in $(\R, 0)$ and $g(0)\neq 0$, can be written in the form \eqref{main-system} with $R=1,$
$$
\bm{\Phi}(\mathbf{x})=
\big(x^2, y\big)
\quad
\mbox{and} 
\quad  
\mathbf{G}(\mathbf{u})=
\bigg(v,-\frac{g(u)+vf(u)}{2}\bigg)^{T}.
$$
Since $\bm{\Phi}(\mathbf{0})=\mathbf{0}$
and
$\mathbf{G}(\mathbf{0})=(0,-g(0)/2)\neq \mathbf{0}$,  this Li\'enard system is locally $C^{\omega}\!$ integrable by Theorem~\ref{th:main}.
\end{example}

\begin{example}
\rm
The  quadratic differential system 
$$
\begin{array}{l}
\dot{x}=x(ax+by+c),\\[0.3pc]
\dot{y}=y\big(\frac{ab}{B}x+By-c\big),  
\end{array}
$$
where $a,b\in \R$ and $c,B\in \R^{*}:=\R\backslash \{0\}$, is a ($4$-parametric) subcase of the classical Lotka--Volterra systems. It can be written in the form \eqref{main-system}
with  $R=1,$ 
$$
\bm{\Phi}(\mathbf{x})=
\Big(xy, y-\frac{a}{B}x\Big)
\quad
\mbox{and} 
\quad  
\mathbf{G}(\mathbf{u})=
\big((b+B)u,Bv-c\big)^{T}.
$$
Since $\bm{\Phi}(\mathbf{0})=\mathbf{0}$
and
$\mathbf{G}(\mathbf{0})=(0,-c)\neq \mathbf{0}$,  this particular Lotka--Volterra system is locally $C^{\omega}$ integrable by Theorem~\ref{th:main}. Moreover,  the reduced system 
$\dot{\mathbf{u}}=\mathbf{G}(\mathbf{u})$ 
can be solved easily, whence we get that it has 
$$
I(\mathbf{u})=u\big(Bv-c\big)^{-\frac{b+B}{B}}
$$ 
as a $C^{\omega}\!$ first integral in $(\R^2, \mathbf{0})$. Thus, 
$$
H(\mathbf{x})=xy\big(By-ax-c\big)^{-\frac{b+B}{B}}
$$
is a $C^{\omega}\!$ first integral in 
$(\R^2, \mathbf{0})$ of our particular 
Lotka--Volterra system.  
Furthermore,  if $b=-(p+q)B/q$, with $p,q\in \N$, 
then the system is globally polynomially 
integrable, with the polynomial first integral
$
H_{p,q}(\mathbf{x})=(xy)^q(By-ax-c)^p,
$
cf. \cite{LlVa2007}.
\end{example}

\begin{example}
\rm
The planar Lotka--Volterra system 
$$
\begin{array}{l}
\dot{x}=x(ax+by+c),\\[0.3pc]
\dot{y}=y\Big(\frac{a(B-b)}{b}x+\frac{B}{2}y+\frac{Bc}{b}\Big),  
\end{array}
$$
where $a,c\in \R$, $b,B\in \R^{*}$ and $bB<0$, can be written in the form \eqref{main-system}
with  $R=1,$
$$
\bm{\Phi}(\mathbf{x})=
\bigg(x, cy+axy+\frac{b}{2} y^2\bigg)
\quad
\mbox{and} 
\quad  
\mathbf{G}(\mathbf{u})=
\bigg(u,\frac{B}{b}v\bigg)^{T}.
$$
Thus, 
$\dot{\mathbf{u}}=\mathbf{G}(\mathbf{u})$ 
is  a reduced differential system for 
this particular Lotka--Volterra system and it can be solved easily, whence we get that either
$I(\mathbf{u})=uv^{-b/B}$  or $I(\mathbf{u})=vu^{-B/b}$ 
is a $C^l\!$ first integral in $\R^2$ 
of the reduced system, 
with $l=[-b/B]\in \N$ or $l=[-B/b]\in \N$, respectively, where $[r]$ stands for the integer part of the real number $r$. 
In addition, if  $-b/B$ is a positive rational number $p/q$, then 
$I(\mathbf{u})=u^{q}v^{p}$ 
is a polynomial first integral in $\R^2$ 
of the reduced system. 

Since $\bm{\Phi}(\mathbf{x})$ is polynomial, $\bm{\Phi}(\mathbf{0})=\mathbf{0}$
and system 
$\dot{\mathbf{u}}=\mathbf{G}(\mathbf{u})$ 
is  polynomially or $C^l\!$  integrable in $\R^2$,   
the Lotka--Volterra system is   polynomially or $C^l\!$ integrable  in $\R^2$ by Remark~\ref{Criterio-para-integrabilidad-2}, with polynomial first integral 
$$
H(\mathbf{x})=x^q\bigg(cy+axy+\frac{b}{2} y^2\bigg)^{p},
$$
if $-b/B=p/q\in \mathbb{Q}^{+}$, and $C^l\!$ first integral either
$$
H(\mathbf{x})=x\bigg(cy+axy+\frac{b}{2} y^2\bigg)^{-b/B}
\quad  \mbox{or} \quad 
H(\mathbf{x})=\bigg(cy+axy+\frac{b}{2} y^2\bigg)x^{-B/b}
$$
if $-b/B\in \R^{+}\setminus\mathbb{Q}^{+}$ and either $l=[-b/B]\in \N$ or $l=[-B/b]\in \N,$
cf. \cite{LlVa2007}.
\end{example}

\medskip
We finish this section with an example where the function $\mathbf{\Phi}$ does not preserve dimension but the approach developed also allows to prove (complete) integrability. 

\begin{example}
	\label{Ejemplo-con-diferencial-degenerada-2D}
	\rm 
	By using $R=1$ and the  functions
	$$
	\bm{\Phi}(\mathbf{x})=
	(x,s(x))
	\quad
	\mbox{and} 
	\quad  
	\mathbf{G}(\mathbf{u})=
	(1,2u)^T,
	$$
where $s(x)$ is a $C^2\!$ function satisfying that $x^2+s(x)$ is not constant on open intervals,	system \eqref{main-system} becomes
	\begin{equation*}
		\begin{array}{l}
			\dot{x}= 0,\\[0.4pc]
			\dot{y}=2x-s'(x).
		\end{array}
	\end{equation*}
	Here
	$\bm{\Phi}(\mathbf{0})=\mathbf{0}$ and 
	$\mathbf{G}(\mathbf{0})\ne\mathbf{0}$ but we can not apply  Theorem~\ref{th:main} because $\mathbf{\Phi}$ does not preserve dimension.  	Notice that  $\det (D\bm{\Phi}(\mathbf{x}))\equiv 0.$  Nevertheless, since  $I(\mathbf{u})=u^2+v$ is a first integral of the reduced system $\dot {\mathbf{u}}=\mathbf{G}(\mathbf{u})$
	 and 
	$H(\mathbf{x})=I(\mathbf{\Phi}(\mathbf{x}))= I(x,s(x))= x^2+s(x)$  is a first integral of  the system. Clearly, there is a more natural first integral $J(\mathbf{x})=x.$
	\end{example}

\section{Applications in the space}\label{sec:3D}

In this section, 
we  illustrate the application of our main results restricted to the three dimensional real space $\R^3$. As in the 2-dimensional case we state a corollary of Theorem~\ref{th:main} for this case. We introduce the notation
$\mathbf{x}=(x,y,z),$  $\mathbf{u}=(u,v,w),$  $\mathbf{\Phi}=(\varphi,\psi,\eta)$ and $\mathbf{G}=(f,g,h)^{T}.$

\begin{corollary}
	\label{sistema-integrable-space}
	Consider $C^r\!$ differential system in the space\[
\left(\begin{array}{c}
	\dot x\\
	\dot y\\
	\dot z
\end{array}\right)\!=\! R \!
\left(\begin{array}{rrr}
	\psi_y\eta_z-\psi_z\eta_y & \varphi_z\eta_y-\varphi_y\eta_z & \varphi_y\psi_z-\varphi_z\psi_y\\
	\psi_z\eta_x-\psi_x\eta_z	&\varphi_x\eta_z-\varphi_z\eta_x & \varphi_z\psi_x-\varphi_x\psi_z \\
	\psi_x\eta_y-\psi_y\eta_x	& \varphi_y\eta_x-\varphi_x\eta_y & \varphi_x\psi_y-\varphi_y\psi_x
\end{array} \right)\!\!
\left(\begin{array}{c}
	f(\varphi,\psi,\eta)\\
	g(\varphi,\psi,\eta)\\
	h(\varphi,\psi,\eta)
\end{array}\right)\!,
\]	
where $\varphi,\psi,\eta, f, g, $ and $h$ are suitable $C^k\!$ scalar functions in $(\R^3,\mathbf{0})$, with $k\geq r$, and $R$ is a real-valued function   such that  $R\neq 0$ in a full Lebesgue measure open subset of $(\R^3,\mathbf{0})$ (we have omitted the dependence on $(x,y,z)$ for simplicity).  If the map $\mathbf{\Phi}=(\varphi,\psi,\eta)$ preserves dimension, then  the system is $C^r\!$ 
	integrable in $(\R^3,\mathbf{0})$ provided that
	$\varphi(\mathbf{0})=\psi(\mathbf{0})=\eta(\mathbf{0})=0$ 
	and $f^2(\mathbf{0})+g^2(\mathbf{0})+h^2(\mathbf{0})\neq 0$.
\end{corollary}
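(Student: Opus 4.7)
The plan is to recognize the displayed system as the three-dimensional specialization of \eqref{main-system} in Theorem~\ref{th:main}, and then verify that all hypotheses of that theorem are in force so its conclusion can be invoked directly.

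First I would identify the $3 \times 3$ matrix multiplied by $R$ as exactly $\adj(D\bm{\Phi}(\mathbf{x}))$, where $\bm{\Phi} = (\varphi, \psi, \eta)$. This amounts to checking, entry by entry, that each signed $2 \times 2$ minor of the Jacobian
\[
D\bm{\Phi}(\mathbf{x}) = \begin{pmatrix}
\varphi_x & \varphi_y & \varphi_z \\
\psi_x & \psi_y & \psi_z \\
\eta_x & \eta_y & \eta_z
\end{pmatrix}
\]
matches the corresponding entry of the displayed matrix. Recall that since the adjugate is the transpose of the cofactor matrix, the $(i,j)$ entry of $\adj(D\bm{\Phi})$ is $(-1)^{i+j}$ times the determinant of the minor obtained by deleting row $j$ and column $i$ of $D\bm{\Phi}$. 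For instance, the $(1,1)$ entry is $\psi_y\eta_z - \psi_z\eta_y$ and the $(1,2)$ entry is $-(\varphi_y\eta_z - \varphi_z\eta_y) = \varphi_z\eta_y - \varphi_y\eta_z$, in agreement with the statement. Once this identification is complete, the given system is literally of the form $\dot{\mathbf{x}} = R(\mathbf{x})\,\adj(D\bm{\Phi}(\mathbf{x}))\,\mathbf{G}(\bm{\Phi}(\mathbf{x}))$ with $\mathbf{G} = (f,g,h)^T$.

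Next I would verify the remaining hypotheses of Theorem~\ref{th:main}: the map $\bm{\Phi}$ preserves dimension by assumption and is $C^k$ with $k \geq r$; the components of $\mathbf{G}$ are $C^k$ in $(\R^3, \mathbf{0})$; the factor $R$ is nonzero on a full Lebesgue measure open subset by hypothesis; the conditions $\varphi(\mathbf{0})=\psi(\mathbf{0})=\eta(\mathbf{0})=0$ give $\bm{\Phi}(\mathbf{0}) = \mathbf{0}$; and the condition $f^2(\mathbf{0}) + g^2(\mathbf{0}) + h^2(\mathbf{0}) \neq 0$ is precisely $\mathbf{G}(\mathbf{0}) \neq \mathbf{0}$. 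With these in place, the flow box theorem applied to $\dot{\mathbf{u}}=\mathbf{G}(\mathbf{u})$ near the origin supplies a $C^k$ first integral $I$, and then item~(i) of Theorem~\ref{th:main} shows that $H := I \circ \bm{\Phi}$ is a $C^r$ first integral of the displayed system, proving the claimed integrability in $(\R^3, \mathbf{0})$. (In fact item~(iii) gives complete integrability, but existence of one first integral suffices for the stated conclusion.)

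I do not anticipate a significant obstacle. The only step requiring genuine care is the cofactor identification in the first paragraph, which is mechanical but bulky; one must track the sign pattern and the transposition defining the adjugate carefully to match the nine entries of the displayed matrix.
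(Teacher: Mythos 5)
Your proposal is correct and matches the paper's approach: the paper offers no separate proof, presenting the corollary as the $n=3$ specialization of Theorem~\ref{th:main} after recognizing the displayed matrix as $\adj(D\bm{\Phi})$, exactly as you do. Your cofactor/transposition check and the verification of the hypotheses (including $\mathbf{G}(\mathbf{0})\neq\mathbf{0}$ so that item~(iii) applies) are precisely what is needed.
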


We continue presenting several examples of application of Theorem~\ref{th:main} when $n=3.$

\begin{example}\rm {\bf A Kolmogorov system.}  Consider 
$$
R=1,\quad \bm{\Phi}(\mathbf{x})=
\big(xyz,
\psi(x,y,z),
z\big)
\quad
\mbox{and} 
\quad  
\mathbf{G}(\mathbf{u})=
\big(0,0,1\big)^T,
$$	
where $\psi(x,y,z)$ is a $C^r\!$ function in  $(\R^3, \mathbf{0})$, with $\psi(\mathbf{0})=0$ and such that $\bm{\Phi}$ preserves dimension. The  system \eqref{main-system}  in  Theorem~\ref{th:main} becomes
\begin{equation*}
\begin{array}{l}
\dot{x}=x\big(z\psi_z(x,y,z)-y\psi_y(x,y,z)\big),\\[0.4pc]
\dot{y}=y\big(x\psi_x(x,y,z)-z\psi_z(x,y,z)\big),\\[0.4pc]
\dot{z}=z\big(y\psi_y(x,y,z)-x\psi_x(x,y,z)\big),
\end{array}
\end{equation*}
which is a Kolmogorov system in 
$(\R^3, \mathbf{0})$.
Since 
$\bm{\Phi}(\mathbf{0})=\mathbf{0}$ and 
$\mathbf{G}(\mathbf{0})\ne \mathbf{0}$, this Kolmogorov system is $C^{r}\!$ completely integrable in $(\R^2, \mathbf{0})$ by Theorem~\ref{th:main}. 
Furthermore, if $\psi(x,y,z)$ is a polynomial function, then the system is  polynomially completely integrable in $\R^3$, because $\bm{\Phi}(\mathbf{x})$ becomes polynomial and the system 
$\dot{\mathbf{u}}=\mathbf{G}(\mathbf{u})$ is polynomially completely integrable in $\R^3$ with trivial first integrals $I_1(\mathbf{u})=u$ and $I_2(\mathbf{u)}=v.$ 
\end{example}

In the next four examples the functions $\bm{\Phi}$ that we will use preserve dimension according to Lemma~\ref{cond-preserva-dimension}.

\begin{example} {\bf A second Kolmogorov system.}
\rm
Consider the linear differential system 
$\dot{\mathbf{u}}=\mathbf{G}(\mathbf{u})$ 
given by
\begin{equation}
\label{Sist-lineal-compl-int-R3}
\begin{array}{l}
\dot{u}=u,\\[0.2pc]
\dot{v}=-v,\\[0.2pc]
\dot{w}=-w.
\end{array}
\end{equation}
By taking $R=1$ and  the polynomial function
$
\bm{\Phi}(\mathbf{x})=
\big(yz, xz+x^2,xz-y\big),
$
the differential system~\eqref{main-system} is
\begin{equation}
\label{Sist-compl-int-R3}
\begin{array}{l}
\dot{x}=x(xy+3yz+x^2z),\\[0.4pc]
\dot{y}=y(2xy+yz-3x^2z),\\[0.4pc]
\dot{z}=z(-4xy-2yz+x^2z),
\end{array}
\end{equation}
which is a Kolmogorov system.
System 
\eqref{Sist-lineal-compl-int-R3}
is polynomially completely integrable in $\R^3$, with  polynomial first integrals $I_1(\mathbf{u})=uv$ and $I_2(\mathbf{u})=uw$. Since $\bm{\Phi}(\mathbf{x})$ is polynomial on $\R^3$, Theorem~\ref{th:main} implies that system
\eqref{Sist-compl-int-R3}
is polynomially completely integrable in $\R^3$, with functionally independent polynomial first integrals
$$
H_1(\mathbf{x})=yz(xz+x^2)
\quad
\mbox{and}
\quad
H_2(\mathbf{x})=yz(xz-y).
$$ 
\end{example}

\begin{example}
\rm
{\bf A complete integrable R\"ossler system.} 
The  polynomial  differential system
\begin{equation}
\label{Rossler-system}
\begin{array}{l}
\dot{x}=-y-z,\\[0.4pc]
\dot{y}=x,\\[0.4pc]
\dot{z}=xz,
\end{array}
\end{equation}
is a particular case of the differential
system constructed by R\"ossler \cite{Ros1976}. It can be written in the form \eqref{main-system}
with the analytic functions $R=1,$
$$
\bm{\Phi}(\mathbf{x})=
\Big(\frac{x^2+y^2}{2}+z,
y,
z\Big)
\quad
\mbox{and}
\quad 
\mathbf{G}(\mathbf{u})=
\big(0,1,w\big)^T.
$$
Since 
$\Phi(\mathbf{0})=\mathbf{0}$
and
$\mathbf{G}(\mathbf{0})\neq \mathbf{0}$,  system 
\eqref{Rossler-system} is analytically completely integrable by Theorem~\ref{th:main}. Moreover, it is easy to show that system 
$\dot{\mathbf{u}}=\mathbf{G}(\mathbf{u})$ has the functionally independent first integrals $I_1(\mathbf{u})=u$ and $I_2(\mathbf{u})=we^{-v}$. Hence, R\"ossler system \eqref{Rossler-system}   is also Darboux completely integrable, with  independent Darboux first integrals
$$
H_1(\mathbf{x})=\frac{x^2+y^2}{2}+z
\qquad
\mbox{and} 
\qquad 
H_2(\mathbf{x})=ze^{-y},
$$
cf. \cite{LlZh2002, CMF2016}.
\end{example}

\begin{example}
\rm
{\bf A complete integrable Rikitake system.} 
The  polynomial 
differential system
\begin{equation}
\label{Rikitake-system}
\begin{array}{l}
\dot{x}=yz,\\[0.4pc]
\dot{y}=xz,\\[0.4pc]
\dot{z}=1-xy,
\end{array}
\end{equation}
is a particular case of the Rikitake system \cite{Rik1958}. It can be written in the form~\eqref{main-system}
with the analytic functions $R=1,$
$$
\bm{\Phi}(\mathbf{x})=
\Big(\frac{x+y}{2},
y-x,
x^2+z^2\Big)
\quad
\mbox{and} 
\quad  
\mathbf{G}(\mathbf{u})=
\Big(\frac{u}{2},-\frac{v}{2},1\Big)^T.
$$
The point $\mathbf{p}=(1,1,0)$ is a singular point of the system. Since 
$\Phi(\mathbf{p})=(1,0,1)$
and
$\mathbf{G}(1,0,1)\neq \mathbf{0}$,  system 
\eqref{Rikitake-system} is analytically completely integrable by Remark~\ref{Teorema-con-punto-arbitrario} and Theorem~\ref{th:main}. Moreover, it is easy to show that system 
$\dot{\mathbf{u}}=\mathbf{G}(\mathbf{u})$ has the functionally independent  first integrals $I_1(\mathbf{u})=uv$ and $I_2(\mathbf{u})=v^2e^{w}$. Hence, Rikitake system \eqref{Rikitake-system}  is also Darboux completely integrable, with  independent Darboux first integrals
$$
H_1(\mathbf{x})=\frac{y^2-x^2}{2}
\qquad
\mbox{and} 
\qquad 
H_2(\mathbf{x})=(y-x)^2\,e^{x^2+z^2},
$$
cf. \cite{LlZh2000,LlMe2009}.
\end{example}

\begin{example}
\rm
{\bf A complete integrable 
Lotka--Volterra system.} 
The  polynomial 
dif\-fer\-en\-tial system
\begin{equation}
\label{LV-system-1}
\begin{array}{l}
\dot{x}=x(z-cy),\\[0.4pc]
\dot{y}=y(x-az),\\[0.4pc]
\dot{z}=z\Big(y-\dfrac{x}{ac}\Big),
\end{array}
\end{equation}
with $a,c\in \R^{+}$, is part of the so-called $(a,b,c)$  Lotka--Volterra systems \cite{Lab1996}.  

If $c \geq 1$,  then system \eqref{LV-system-1} can be written in the form \eqref{main-system}, 
with the $C^1\!$ functions $R(\mathbf{x})=(1/c)z^{1-c},$ 
$$
\bm{\Phi}(\mathbf{x})=
\big(acz+cy+x,
y,
 xz^{c}\big),
\ \
\mbox{and}
\ \
\mathbf{G}(\mathbf{u})=
\Big(0,v,-\frac{w}{a}\Big)^T.
$$
If $c<1$, then system \eqref{LV-system-1} can be written in the form \eqref{main-system}, 
 with the $C^1\!$ functions $R(\mathbf{x})=x^{1-1/c},$  
 $$
\bm{\Phi}(\mathbf{x})=
\big(acz+cy+x,
y,
 x^{1/c}z\big),
\ \
\mbox{and}
\ \
\mathbf{G}(\mathbf{u})=
\Big(0,v,-\frac{w}{ac}\Big)^T.
$$

In both cases, $c\geq 1$ and $c<1$, 
 the reduced differential system 
can be solved easily, whence we get that 
${I_1(\mathbf{u})=u}$ is a polynomial 
first integral of 
$\dot{\mathbf{u}}=\mathbf{G}(\mathbf{u})$. 
Moreover, on one hand,
for case $c\geq 1$ we obtain that 
if $a\geq 1$, then  
${I_2(\mathbf{u})=vw^{a}}$ 
is a $C^{\beta}\!$ first integral 
in $\R^3$ of the reduced system, 
with  $\beta=[a]\in \N$, and 
if $a<1$, then 
${I_2(\mathbf{u})=v^{1/a}w}$ 
is a $C^{\beta}\!$ first integral 
in $\R^3$ of the reduced system, 
with $\beta=[1/a]\in \N$. On the other hand,
for case $c< 1$  it follows that if $a\geq 1$, then  
${I_2(\mathbf{u})=v^{1/c}w^a}$ 
is a $C^{\beta}\!$ first integral in $\R^3$ 
of the reduced system, 
with  $\beta=\min\{[a],[1/c]\}\in \N$, and
if $a<1$, then
${I_2(\mathbf{u})=v^{1/(ac)}w}$
is a $C^{\beta}\!$ first integral in $\R^3$ 
of the reduced system, 
with $\beta=[1/(ac)]\in \N$.


Since $\bm{\Phi}(\mathbf{0})=\mathbf{0}$, $\bm{\Phi}(\mathbf{x})$ is a polynomial or $C^{\alpha}\!$ function, with $\alpha=[c]\in \N$ if $c\geq 1$ and $\alpha=[1/c]\in \N$ if $c<1$, 
and system 
$\dot{\mathbf{u}}=\mathbf{G}(\mathbf{u})$ 
is  polynomially or $C^{\beta}\!$ completely integrable in $\R^3$,   
the Lotka--Volterra system is  polynomially or
$C^k\!$  completely integrable  in $\R^3$ by Remark~\ref{Criterio-para-integrabilidad-2}, with $k\geq 1$. 
More precisely, 
the Lotka--Volterra system always has the polynomial first integral 
$$
H_1(\mathbf{x})=x+cy+acz.
$$
If $a=p/q\in \mathbb{Q}^{+}$ and $c=p_1/q_1\in \mathbb{Q}^{+}$, then the Lotka--Volterra system is  polynomially  completely integrable in $\R^3$, with the additional independent polynomial first integral 
$$
H_2(\mathbf{x})=x^{pq_1}y^{qq_1}z^{pp_1}.
$$

If $c\not \in \mathbb{Q}^{+}$ and $c>1$, then the Lotka--Volterra system is  $C^k\!$ completely integrable  in $\R^3$, with the additional functionally independent $C^k\!$ first integral either
$$
H_2(\mathbf{x})=x^ayz^{ac}
\ \
\mbox{or}
\ \
H_2(\mathbf{x})=xy^{1/a}z^{c},
$$
where $k\geq\min\big\{[a],[ac]\big\}$ if $a>1$ or $k\geq\min\big\{[1/a],[c]\big\}$ if $a<1$.

If $c\not \in \mathbb{Q}^{+}$ and $c<1$, then the Lotka--Volterra system is  $C^k\!$ completely integrable  in $\R^3$, with the additional functionally independent $C^k\!$ first integral either
$$
H_2(\mathbf{x})=x^{a/c}y^{1/c}z^{a}
\ \
\mbox{or}
\ \
H_2(\mathbf{x})=x^{1/c}y^{1/(ac)}z,
$$
where $k\geq\min\big\{[1/c],[a]\big\}$ if $a>1$ or $k\geq\min\big\{[1/c],[1/(ac)]\big\}$ if $a<1$,
cf. \cite{Lab1996, CaLl2000}.
\end{example}

We conclude this section with an example where the function $\mathbf{\Phi}$ does not preserve dimension but the approach developed also allows to prove complete integrability. 

\begin{example}
	\label{Ejemplo-con-diferencial-degenerada}
	\rm
	By using $R=1$ and the polynomial functions
	$$
	\bm{\Phi}(\mathbf{x})=
	\big(xz-y^2,
	yz-y^2,
	xz-yz\big)
	\quad
	\mbox{and} 
	\quad  
	\mathbf{G}(\mathbf{u})=
	\big(0,0,1\big)^T
	$$
	system \eqref{main-system} becomes
	\begin{equation*}
		\begin{array}{l}
			\dot{x}= 2xy-xz-2y^2,\\[0.4pc]
			\dot{y}=-yz,\\[0.4pc]
			\dot{z}=-2yz+z^2.
		\end{array}
	\end{equation*}
	Here
	$\bm{\Phi}(\mathbf{0})=\mathbf{0}$ and 
	$\mathbf{G}(\mathbf{0})\ne\mathbf{0}$ but we can not apply  Theorem~\ref{th:main} because $\mathbf{\Phi}=(\varphi,\psi,\eta)$ does not preserve dimension. In fact, its components satisfy $\varphi-\psi-\eta=0.$ 	Notice that  $\det (D\bm{\Phi}(\mathbf{x}))\equiv 0.$  Nevertheless the system 
	is polynomially completely integrable in $(\R^3, \mathbf{0}).$ The reason is that 
	$\bm{\Phi}(\mathbf{x})$ is polynomial and its reduced system 
	is polynomially completely integrable, with first integrals $I_1(\mathbf{u})=u$ and $I_2(\mathbf{u})=v.$  Hence their transformed first integrals 
	$H_1(\mathbf{x})= xz-y^2$ and 
	$H_2(\mathbf{x})= yz-y^2$ are also functionally independent.
	\end{example}

\section{Applications in higher dimensions}

In this section, 
we  illustrate the application of our main result in higher dimensions. In particular, an example for arbitrary dimension is presented. Again the functions $\bm{\Phi}$ in next examples preserve dimension according to Lemma~\ref{cond-preserva-dimension}.

\begin{example}
\rm
{\bf Some 4D complete integrable 
Kolmogorov systems}. For each $l\in \N$, consider the polynomial Kolmogorov system 
\begin{equation}
\label{LV-system-2}
\begin{array}{l}
\dot{x}_1=x_1(x_2^l-x_4^l),\\[0.4pc]
\dot{x}_2=x_2(x_3^l-x_1^l),\\[0.4pc]
\dot{x}_3=x_3(x_4^l-x_2^l),\\[0.4pc]
\dot{x}_4=x_4(x_1^l-x_3^l),
\end{array}
\end{equation}
which for $l=1$ is a Lotka--Volterra system. This differential system
can be written in the form \eqref{main-system}
with the analytic functions $R=1,$
$$
\bm{\Phi}(\mathbf{x})=
\big(x_1,x_1x_3,x_2x_4,x_1^l+x_2^l+x_3^l+x_4^l
\big)
$$
and
$
\mathbf{G}(\mathbf{u})=
({1}/{l},0,0,0)^T.
$
Since 
$\Phi(\mathbf{0})=\mathbf{0}$
and
$\mathbf{G}(\mathbf{0})\neq \mathbf{0}$,  system \eqref{LV-system-2} is analytically completely integrable by Theorem~\ref{th:main}. Moreover, the 
differential system 
$\dot{\mathbf{u}}=\mathbf{G}(\mathbf{u})$ is polynomially completely integrable, with polynomial first integrals 
$
I_i(\mathbf{u})=u_{i+1}$,   for $i=1,2,3$.
Hence, system \eqref{LV-system-2} is polynomially completely integrable, with independent polynomial first integrals
$$ H_1(\mathbf{x})=x_1x_3,\quad 
H_2(\mathbf{x})=x_2x_4
\quad
\mbox{and}
\quad
H_3(\mathbf{x})=\sum_{i=1}^4x_i^l.
$$ 
\end{example}

\begin{example}
\rm
{\bf Some 4D complete integrable 
nilpotent systems}. Consider the polynomial differential systems 
\begin{equation}
\label{nilpotente}
\begin{array}{l}
\dot{x}_1=P_1\big(x_2+A_1(x_1)\big),\\[0.4pc]
\dot{x}_2=P_2\big(x_3+\frac{1}{d_2}A_2(x_1)\big)-A_1'(x_1)P_1\big(x_2+A_1(x_1)\big),\\[0.4pc]
\dot{x}_3=P_3\big(x_4+\frac{1}{d_3}A_3(x_1)\big)-\frac{A_2'(x_1)}{d_2}P_1\big(x_2+A_1(x_1)\big),\\[0.4pc]
\dot{x}_4=-\frac{A_3'(x_1)}{d_3}P_1\big(x_2+A_1(x_1)\big),
\end{array}
\end{equation}
where $P_i\in \R[s]$ is a monic polynomial of degree $d_i,$ with $d_2d_3\ne0,$ $P_1^2(\mathbf{0})+P_2^2(\mathbf{0})+P_3^2(\mathbf{0})\ne0,$ and $A_i(x_1)=a_{i}+b_{i}x_1$, $a_i,b_i\in\R,$ for $i=1,2,3$. This system is a \emph{nilpotent differential system} since its right-hand side is a nilpotent map \cite{CaEs2021}, that is, the linearization matrix of the differential system at each point of 
$\R^4$ is nilpotent. 

System \eqref{nilpotente} 
can be written in the form \eqref{main-system}
with the analytic functions
$$
\bm{\Phi}(\mathbf{x})=
\Big(x_1,x_2+A_1(x_1),
x_3+\frac{1}{d_2}A_2(x_1),x_4+\frac{1}{d_3}A_3(x_1)
\Big)
$$
and
$$ 
\mathbf{G}(\mathbf{u})=
\Big(P_1(u_2),P_2(u_3),P_3(u_4),0\Big)^T.
$$
Since 
$\mathbf{\Phi}(\mathbf{0})=\mathbf{0},$   $\mathbf{G}(\mathbf{0})=(P_1(\mathbf{0}),P_2(\mathbf{0}),P_3(\mathbf{0}),0)\ne\mathbf{0},$
and the reduced system 
$\dot{\mathbf{u}}=\mathbf{G}(\mathbf{u})$ is polynomially completely integrable, the differential system~\eqref{nilpotente} is polynomially completely integrable in $\R^4$ by Theorem~\ref{th:main}.
\end{example}

\begin{example}
\rm  {\bf A $n$-dimensional  family of Kolmogorov complete integrable systems.}	
Consider the polynomial functions $R=1,$
$$
\bm{\Phi}(\mathbf{x})=
\Big(x_1,x_1x_2,x_2x_3,\ldots,x_{n-1}x_n
\Big)
$$
and the $C^r\!$ function
$$ 
\mathbf{G}(\mathbf{u})=
\big(G_1(\mathbf{u}),1+G_2(\mathbf{u}),\ldots,1+G_{n-1}(\mathbf{u}),0\big),
$$
with $\mathbf{u}\in (\R^n,\mathbf{0})$ and $G_i(\mathbf{0})=0$ for $i\geq 2$.

From the identity 
$
 \mathbf {A} \operatorname {adj} (\mathbf {A} )=\det(\mathbf {A} )\mathbf{I}_n
$ and by defining 
$$
P_{ij}:=(-1)^{i+j}\prod_{\mu=1}^{j-2}x_{\mu}\prod_{\nu=j+1}^{n-1}x_{\nu},
$$
with $\prod_{\mu=1}^{j-2}x_{\mu}\equiv 1$ if $j \leq 2$ and $\prod_{\nu=j+1}^{n-1}x_{\nu}\equiv 1$ if $j\geq n-1$,
it follows that
$$
\operatorname {adj} (D\bm{\Phi}(\mathbf{x}))=
\begin{pmatrix}
x_1P_{11} & 0   & \cdots &  0 \\
x_2P_{21} &  x_2P_{22}  & \cdots & 0\\
\vdots & \vdots  & \ddots & \vdots   \\
x_nP_{n1}&  x_nP_{n2}& \cdots & P_{nn}
\end{pmatrix}.
$$
Hence,  system  \eqref{main-system} becomes
\begin{equation*}
\label{Kol-system-n}
\begin{array}{rcl}
\dot{x}_1 \!\!\!&=&\!\!\!
x_1P_{11}G_1(\bm{\Phi}(\mathbf{x})),
\\[0.4pc]
\dot{x}_2 \!\!\!&=&\!\!\!
x_2\Big(P_{21}G_1(\bm{\Phi}(\mathbf{x}))
+P_{22}\big(1+G_1(\bm{\Phi}(\mathbf{x}))\big)\Big),
\\[0.4pc]
&\vdots&\\[0.4pc]
\dot{x}_n \!\!\!&=&\!\!\! x_n\Big(P_{n1}G_1(\bm{\Phi}(\mathbf{x}))
+\sum_{j=2}^{n-1}P_{nj}\big(1+G_j(\bm{\Phi}(\mathbf{x}))\big)\Big),
\end{array}
\end{equation*}
which is a $C^r\!$ Kolmogorov system in $(\R^n,\mathbf{0})$.
Since 
$\Phi(\mathbf{0})=\mathbf{0}$
and
$\mathbf{G}(\mathbf{0})\neq \mathbf{0}$, this  system
is $C^r\!$ completely integrable  in $(\R^n,\mathbf{0})$ by Theorem~\ref{th:main}.
\end{example}

\section*{Acknowledgements}

The first author is supported by 
Direcci\'on de Investigaci\'on of the UCSC through project DIREG 01/2024. 
The second author is supported by the Spanish State Research Agency, through the project PID2022-136613NB-I00 grant and the   grant 2021-SGR-00113 from AGAUR, Generalitat de Ca\-ta\-lu\-nya.
The third author is supported by European research project H2020-MSCA-RISE-2017–777911 and Universidad del B\'{\i}o-B\'{\i}o grant RE2320122.


\begin{thebibliography}{25}

\bibitem{AGG2019} A. Algaba, C. Garc\'{i}a, J. Gin\'e, {\it Analytic integrability around a nilpotent singularity.} 
J. Differential Equations  {\bf 267} (2019), 443--467.

\bibitem{AGR2019} A. Algaba, C. Garc\'{i}a, M. Reyes, {\it Invariant curves and analytic integrability of a planar vector field.} 
J. Differential Equations {\bf 266} (2019), 1357--1376.

\bibitem{AlGaRe} A. Algaba, C. Garc\'{i}a, M. Reyes,
{\it A note on integrability of planar vector fields}. Eur. J. Appl. Math. (2012), 555--562.

%
%

\bibitem{Andreev} 
A. Andreev, A. P. Sadovskii,  V. A. Tskialyuk, 
{\it The center-focus problem 
for a system with homogeneous
nonlinearities in the case zero 
eigenvalues of the linear part}.
Diff. Equats. {\bf 39} (2003), 155--164.

\bibitem{BBT} 
J.  Bastos, C. Buzzi, J. Torregrosa, 
{\it Orbitally symmetric systems
with applications to planar centers}.
Commun. Pure Appl. Anal. {\bf 20} (2021), 3319--3347.


\bibitem{CaLl2000} L. Cair\'o, J. Llibre,
{\it Darboux integrability for $3D$ Lotka--Volterra systems}. 
J. Phys. A: Math. Gen. {\bf 33} (2000), 2395--2406.


\bibitem{CaEs2021} A. Casta\~{n}eda, A. van den Essen, {\it A new class of nilpotent Jacobians in any dimension.}
J. Algebra {\bf 566} (2021),
283--301.



\bibitem{CMF2016}  
F. Crespo, F. Molero, S. Ferrer, 
{\it Poisson and integrable systems through the Nambu bracket and its Jacobi multiplier}.
J. Geom. Mech. {\bf 2} (2016), 169--178.


\bibitem{Dar1878a} G. Darboux, 
{\it M\'emoire sur les \'equations diff\'erentielles alg\'ebriques du premier ordre et du
premier degr\'e (M\'elanges)}. Bull. Sci. Math. {\bf 2} (1878), 60--96; 123--144; 151--200.

\bibitem{Dar1878b} G. Darboux, 
{\it Del’emploi des solutions particuli\`eres alg\'ebriques dans l’int\'egration des syst\`emes d’\'equations diff\'erentielles alg\'ebriques}. C.R. Math. Acad. Sci. Paris {\bf 86} (1878), 1012--1014.

\bibitem{GGG2010} 
I. A. García, H. Giacomini, M. Grau, 
{\it The inverse integrating factor and the Poincar\'e map.} 
Trans. Amer. Math. Soc. {\bf 362} (2010), 3591--3612.


\bibitem{Gor2001} A. Goriely, 
Integrability and Nonintegrability of Dynamical Systems, Advanced Series in Nonlinear Dynamics, 
vol. {\bf 19}, World Scientific Publishing Co., Inc., River Edge, NJ, 2001.

\bibitem{Lab1996} S. Labrunie, 
{\it On the polynomial first integrals of the $(a, b, c)$ Lotka--Volterra system.} 
J. Math. Phys. {\bf 37} (1996), 5539--5550.

\bibitem{Lax1968}  P. D. Lax, 
{\it Integrals of nonlinear equations of evolution and solitary waves.} 
Commun. Pure Appl. Math. {\bf 21} (1968), 467--490.

\bibitem{LRR2020} 
J. Llibre, R. Ram\'\i rez, V. Ram\'\i rez, 
{\it Integrability of a class of N-dimensional Lotka--Volterra and Kolmogorov systems.} 
J. Differential Equations {\bf 269} (2020), 2503--2531.

\bibitem{LlZh2009} 
J. Llibre, X. Zhang,  
{\it Darboux theory of integrability in $\C^n$ taking into account the multiplicity.}
J. Differential Equations {\bf 246} (2009), 541--551.

\bibitem{LlMe2009} J. Llibre, M. Messias, 
{\it Global dynamics of the Rikitake system.} 
Physica D {\bf 238} (2009), 241--252.

\bibitem{LlVa2007} J. Llibre, C. Valls, 
{\it Global analytic first integrals for the real planar
Lotka--Volterra system.} 
J. Math. Phys. {\bf 48} (2007), 033507-13. 

\bibitem{LlZh2002} J. Llibre, X. Zhang, 
{\it Darboux integrability for the R\"{o}ssler system.} Int. J. Bifurcation Chaos Appl. Sci. Eng. 
{\bf 12} (2002), 421--428.

\bibitem{LlZh2000} J. Llibre, X. Zhang, 
{\it Invariant algebraic surfaces of the Rikitake system.} J. Phys. A: Math. Gen. {\bf 33} (2000) 7613--7635.

\bibitem{Olv1993} P. J.  Olver,  
Applications of Lie Groups to Differential Equations, second ed., 
Graduate Texts in Mathematics, vol. {\bf 107},
Springer-Verlag, New York, 1993.

\bibitem{Rik1958} T. Rikitake, 
{\it Oscillations of a system of disk dynamos.} 
Proc. Cambridge Philos. Soc. {\bf 54} (1958), 89--105.

\bibitem{Ros1976} O. R\"ossler, 
{\it An equation for continuous chaos.} 
Phys. Lett. A {\bf 57} (1976), 397--398.

\bibitem{SaCa1981}  W. Sarlet, F.  Cantrijn, 
{\it Generalizations of Noether’s theorem in classical mechanics.} SIAM Rev. {\bf 23} (1981), 467--494.


\end{thebibliography}
\end{document}